\newcommand{\ignore}[1]{}
\newcommand{\com}[1]{}
\newcommand{\boxi}{\ensuremath{\mathrm{box}}}
\newcommand{\bbox}{\rule{0.6em}{0.6em}}
\begin{document}

\title{Chordal Bipartite Graphs with High Boxicity}
\author{L. Sunil Chandran\and Mathew C. Francis\and Rogers Mathew}
\institute{Department of Computer Science and Automation, Indian Institute of
Science, Bangalore -- 560012, India.\\
\texttt{\{sunil,mathew,rogers\}@csa.iisc.ernet.in}}
\maketitle
\bibliographystyle{plain}
\begin{abstract}
The boxicity of a graph $G$ is defined as the minimum
integer $k$ such that $G$ is an intersection graph of axis-parallel
$k$-dimensional boxes. Chordal bipartite graphs are bipartite graphs that do
not contain an induced cycle of length greater than 4. It was conjectured
by Otachi, Okamoto and Yamazaki that chordal bipartite graphs have
boxicity at most 2. We disprove this conjecture by exhibiting an infinite family
of chordal bipartite graphs that have unbounded boxicity.

\medskip\noindent\textbf{Key words: }
Boxicity, chordal bipartite graphs, interval graphs, grid intersection graphs.
\end{abstract}
\section{Introduction}
A graph $G$ is an \emph{intersection graph} of sets from a family of sets
$\mathcal{F}$, if there exists $f:V(G)\rightarrow \mathcal{F}$ such that
$(u,v)\in E(G)\Leftrightarrow f(u)\cap f(v)\not=\emptyset$.
\com{ More formally, 
let $G=(V,E)$ be a graph and let $\mathcal{F} = \{S_v~|~v \in V(G)\}$ be a
family of sets. Then, $G$ is an 
intersection graph if there exists a mapping $f:V(G)\rightarrow \mathcal{F}$
such that 
for any two vertices $u,v \in V(G)$, $(u,v) \in E(G)$ if and only if $f(u)
\cap f(v) \neq \emptyset$.}
An \emph{interval graph} is an intersection graph
in which the set assigned to each vertex is a closed interval on the real line. 
In other words, interval graphs are intersection graphs of closed intervals on
the real line.
An \emph{axis-parallel $k$-dimensional box} in $\mathbb{R}^k$
\com{a $k$-dimensional space} is the
Cartesian product $R_1 \times R_2 \times \cdots \times R_k$, where each $R_i$ is
an interval of the form $[a_i, b_i]$ on the real line.
\com{ Thus, any point
$p=(x_1,x_2, \cdots ,x_k)$ is on or inside the $k$-dimensional box if 
$(x_1,x_2, \cdots ,x_k) \in R_1 \times R_2 \times \cdots \times R_k$.}
\emph{Boxicity} of any graph $G$ (denoted by $\boxi(G)$) is the minimum integer
$k$ such that $G$ is an intersection graph of axis-parallel $k$-dimensional
boxes in $\mathbb{R}^k$. Note that interval graphs are exactly those
graphs with boxicity at most 1.

The concept of boxicity was introduced by F. S. Roberts in the year 1969
\cite{Roberts}. It finds applications in niche overlap (competition) in
ecology and to problems of fleet maintenance in operations research
(see \cite{CozRob}). Roberts proved that the boxicity of any graph 
on $n$ vertices is upper bounded by $\lfloor \frac{n}{2}\rfloor$. He also 
showed that a complete $\frac{n}{2}$-partite graph with 2 vertices in each part 
has its boxicity equal to $\frac{n}{2}$. Various other upper bounds on
boxicity in terms of graph parameters such as maximum degree and treewidth
were proved by Chandran, Francis and Sivadasan. In \cite{tech-rep}  they showed
that, for any graph $G$ on $n$ vertices having maximum degree $\Delta$,
$\boxi(G) \leq (\Delta + 2)\ln n $.
They also upper bounded boxicity solely in terms of the maximum degree $\Delta$
of a graph by showing that $\boxi(G) \leq 2\Delta^2$ \cite{CFNMaxdeg}.
This means that the boxicity of degree bounded graphs do not is bounded no
matter what the size of the vertex set is.
\com{vary with the size of their vertex set.}
It was shown in \cite{CN05} by Chandran and Sivadasan that $\boxi(G) \leq
\mathrm{tw}(G)+ 2$, where $\mathrm{tw}(G)$ denotes the treewidth of graph $G$. 

Cozzens \cite{Coz} proved that given a graph, the problem of computing its 
boxicity is NP-hard. Several attempts have been made to find good upper bounds
\com{compute} for the 
boxicity of special classes of graphs. It was shown by Thomassen in
\cite{Thoma1} that planar graphs have boxicity at most 3. Meanwhile,
Scheinerman \cite{Scheiner} proved that outerplanar graphs have boxicity at most
2. The boxicity of split graphs was investigated by
Cozzens and Roberts \cite{CozRob}. Apart from these results, not much is 
known about the boxicity of most of the well-known graph classes.
\subsection{Chordal Bipartite Graphs (CBGs)}
A bipartite graph $G$ is a \emph{chordal bipartite graph} (CBG) if $G$ does not have
an induced cycle of length greater than 4. In other words, all induced 
cycles in such a bipartite graph will be of length exactly equal to 4. 
Chordal bipartite graphs were introduced by Golumbic and Goss \cite{GoluGoss},
as a natural bipartite analogue of chordal graphs.
Chordal bipartite graphs are a well studied class of graphs 
and several characterizations have been found, such as by 
the elimination scheme, minimal edge separators, $\Gamma$-free matrices etc.
(refer \cite{Golu}). 
\subsection{Our Result}
\com{
It was shown in \cite{Bellantoni} that a bipartite graph has boxicity at most
2 if and only if it is a grid intersection graph.}
In 2007, Otachi, Okamoto and Yamazaki \cite{OOY} proved that 
$P_6$-free chordal bipartite graphs have boxicity at most $2$.
\com{ and are therefore grid intersection graphs.}
In the same paper, they also conjectured that the boxicity of any chordal
bipartite graph is upper bounded by the same constant 2.
\com{which would imply that all
chordal bipartite graphs are grid intersection graphs.}
We disprove this conjecture by showing that there exist chordal 
bipartite graphs with arbitrarily high boxicity. This result also implies
that the class of chordal bipartite
graphs is incomparable with the class of ``grid intersection graphs''
(see \cite{Bellantoni}).
\section{Definitions and Notations}
Let $V(G)$ and $E(G)$ denote the vertex set and edge set respectively of 
a graph $G$. For any $S \subseteq V(G)$, let $G-S$ denote the graph 
induced by the vertex set $V(G) \setminus S$ in $G$. In this paper, we
consider only simple, finite, undirected graphs. In a graph $G$, for any
$u \in V(G)$, $N(u)$ denotes its neigbourhood in $G$, i.e. $N(u) = \{v~|~(u,v)
\in E(G)\}$. Also, $N[u]$ denotes the closed neighbourhood of $u$ in $G$, i.e.
$N[u] = N(u) \cup \{u\}$. A graph $G$ is a bipartite graph if there is a
partition of $V(G)$ into two sets $A$ and $B$ such that both $A$ and $B$ induce
independent sets in $G$. We call $\{A,B\}$ the \emph{bipartition} of the
bipartite graph $G$.
Given a tree $T$ and two vertices $u$ and $v$ in $T$,
we denote by $uTv$ the unique path in $T$ between $u$ and $v$ (including $u$ and
$v$).
If $G,\,G_1,\,G_2,\ldots,\, G_k$ 
are $k+1$ graphs, where $V(G) = V(G_1) = V(G_2) = \cdots = V(G_k)$,
then we say that $G = G_1 \cap G_2 \cap \cdots \cap G_k$ if 
$E(G) = E(G_1) \cap E(G_2) \cap \cdots \cap E(G_k)$. 

\subsection{Interval Graphs and Boxicity}

\com{We know that interval graphs are intersection graphs in which the set 
assigned to each vertex is an interval on the real line. So any 
interval graph will have an interval representation of 
itself.}
Since an interval graph is the intersection graph of closed intervals on the
real line, for every interval graph $I$, there exists a function
$f:V(I)\rightarrow \{X\subseteq\mathbb{R}~|~X\mbox{ is a closed interval}\}$,
such that for $u,v\in V(I)$, $(u,v)\in E(I)\Leftrightarrow f(u)\cap
f(v)\not=\emptyset$.
The function $f$ is called an \emph{interval representation} of the interval
graph $I$. Note that it is possible for an interval graph to have more than 
one interval representation.
Given a closed interval $X=[y,z]$, we define $l(X):=y$ and $r(X):=z$. Also
note that by the definition of an interval, if $[y,z]$ is an interval, $y\leq
z$.
\com{ For any interval graph $I$, let 
$int(I)$ denote any one such representation. For any $u \in V(I)$, 
let $interval(u,I)$ denote the interval assigned to vertex $u$ 
in $int(I)$. Let $left(u,I)$ ($right(u,I)$) denote the leftmost 
(rightmost) endpoint of $interval(u,I)$. So $interval(u,I)$ is 
the interval $[left(u,I), right(u,I)]$ on the real line.
}
For any two 
intervals $[y_1, z_1], [y_2,z_2]$ on the real line, we say that $[y_1, z_1] <
[y_2,z_2]$ if $z_1 < y_2$. Clearly, $[y_1, z_1] \cap [y_2,z_2] = \emptyset$ if
and only if $[y_1, z_1] < [y_2,z_2]$ or $[y_2,z_2] < [y_1, z_1]$.
\com{
Hence for any 
$u,v \in V(I)$, $interval(u,I) < interval(v,I)$ 
if $left(u,I) \leq right(u,I) < left(v,I) \leq right(v,I)$.
Also $interval(u,I) \cap interval(v,I) = \emptyset$ if 
$interval(u,I) < interval(v,I)$ or $interval(v,I) < interval(u,I)$. 
}

Let $I$ be an interval graph and $f$ an interval representation of $I$.
\com{$k \in \mathbb{N}$ be a positive natural number and.} Let $y,z \in
\mathbb{R}$ with $y \leq z$. Then any set of vertices, say $S=\{u_1, u_2, \ldots
, u_k\}$ where $S \subseteq V(I)$ and $k>0$, is said to	
``overlap in the region $[y,z]$ in $f$'' if each $f(u_i)$ 
(where $1 \leq i \leq k$) contains the region $[y,z]$, i.e. for 
each $u_i \in S$, $l(f(u_i)) \leq y \leq z \leq r(f(u_i))$.
\vspace{0.01in}

A graph $G$ is \emph{chordal} if it does not contain any induced 
cycle of length greater than 3.
\com{ An \emph{asteroidal triple} in 
a graph $G$ is a set $S$ of 3 independent vertices  
such that for any $u \in S$, there exists a path between the other two 
vertices in $S$ which does not contain any vertex adjacent to
 $u$. A graph $G$ is said to be \emph{asteroidal triple-free}
 or AT-free if it does not contain any asteroidal triple. }
\com{The following lemma characterizes interval graphs.}
The following is a well known fact about interval graphs.
\begin{lemma}\com{[Lekkerkerker and Boland\cite{Lekker}]}\label{lekkerlem}
All interval graphs are chordal.
\end{lemma}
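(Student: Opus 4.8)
The plan is to proceed by contradiction, using essentially the classical argument (going back to Lekkerkerker and Boland). Suppose $I$ is an interval graph that contains an induced cycle $C = v_1 v_2 \cdots v_k v_1$ with $k \ge 4$, and let $f$ be an interval representation of $I$. Since $C$ is induced and $k \ge 4$, the vertices $v_1$ and $v_3$ are non-adjacent, so $f(v_1) \cap f(v_3) = \emptyset$; by the observation that two disjoint closed intervals are comparable under the relation $<$, and by the symmetry of $C$ under interchanging $v_1$ and $v_3$, we may assume $f(v_1) < f(v_3)$, i.e. $r(f(v_1)) < l(f(v_3))$. Fix any real number $p$ with $r(f(v_1)) < p < l(f(v_3))$.

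Next I would locate the interval of every cycle vertex relative to $p$. Since $v_2$ is adjacent to both $v_1$ and $v_3$, the interval $f(v_2)$ meets $f(v_1)$ and $f(v_3)$, which forces $l(f(v_2)) \le r(f(v_1)) < p < l(f(v_3)) \le r(f(v_2))$; in particular $p \in f(v_2)$. For each $i$ with $4 \le i \le k$, the vertex $v_i$ is non-adjacent to $v_2$ (again since $C$ is induced), so $f(v_i) \cap f(v_2) = \emptyset$ and hence $f(v_i)$ lies entirely on one side of $f(v_2)$; combining this with the displayed inequalities yields that either $r(f(v_i)) < p$ or $l(f(v_i)) > p$. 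I will call $v_i$ \emph{left} in the former case and \emph{right} in the latter. The same inequalities also show that $v_1$ is left (as $r(f(v_1)) < p$) and $v_3$ is right (as $l(f(v_3)) > p$).

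Finally I would walk along the path $P = v_3 v_4 \cdots v_k v_1$ obtained from $C$ by deleting $v_2$. Every edge of $P$ is an edge of $C$, and every vertex of $P$ is labelled left or right, with the two end vertices $v_3$ and $v_1$ receiving opposite labels. Hence there are two vertices $v_i, v_{i+1}$ that are consecutive along $P$ with $v_i$ right and $v_{i+1}$ left, and then $l(f(v_i)) > p > r(f(v_{i+1}))$, so $f(v_i) \cap f(v_{i+1}) = \emptyset$ --- contradicting the fact that $v_i v_{i+1}$ is an edge of $C$. This contradiction shows that $I$ has no induced cycle of length greater than $3$, i.e. $I$ is chordal.

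The proof is short and I do not expect a genuine obstacle; the only points needing a little care are purely bookkeeping --- checking that the two extreme vertices of $P$ really do fall on opposite sides of $p$, and that the degenerate case $k = 4$ (when $P$ has the single internal vertex $v_4$) is covered --- and both follow at once from the inequalities already established.
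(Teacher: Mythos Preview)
Your argument is correct and is precisely the classical Lekkerkerker--Boland style proof; the bookkeeping you flag (the endpoints of $P$ lying on opposite sides of $p$, and the case $k=4$) is indeed handled by the inequalities you wrote down.

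There is, however, nothing in the paper to compare against: the paper does not prove this lemma at all. It is introduced only as ``a well known fact about interval graphs'' and stated without proof, so your write-up supplies strictly more than the paper does here.
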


We have seen that interval graphs are intersection graphs of 
intervals on the real line. The following lemma gives the relationship between
intersection graphs of axis-parallel $k$-dimensional boxes and interval graphs.
\begin{lemma}[Roberts\cite{Roberts}]\label{robertslem}
For any graph $G$, $\boxi(G) \leq b$ if and only if
there exist $b$ interval graphs $I_1, I_2, \ldots, I_b$,
with $V(G)=V(I_1)=V(I_2)=\cdots=V(I_b)$ such that $G = I_1 
\cap I_2 \cap \cdots \cap I_b$.
\end{lemma}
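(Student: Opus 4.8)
The plan is to prove the two implications of Lemma~\ref{robertslem} separately, each by a direct translation between a box representation of $G$ and a list of $b$ interval representations. The single fact driving both directions is that two axis-parallel boxes intersect exactly when they intersect in every coordinate: if $A = A_1 \times \cdots \times A_b$ and $B = B_1 \times \cdots \times B_b$ with each $A_i,B_i$ a closed interval on the real line, then $A \cap B = (A_1 \cap B_1) \times \cdots \times (A_b \cap B_b)$, which is nonempty if and only if $A_i \cap B_i \neq \emptyset$ for every $i \in \{1,\ldots,b\}$.

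For the forward direction I would assume $\boxi(G) \leq b$, so that there is a map $f$ sending each $v \in V(G)$ to an axis-parallel box $f(v) = R_1(v) \times \cdots \times R_b(v)$ in $\mathbb{R}^b$ with $(u,v)\in E(G) \Leftrightarrow f(u)\cap f(v)\neq\emptyset$. For each $i$ I would let $I_i$ be the interval graph having $v \mapsto R_i(v)$ as an interval representation. By the coordinate-wise fact, $(u,v)\in E(G)$ iff $R_i(u)\cap R_i(v)\neq\emptyset$ for all $i$, i.e.\ iff $(u,v)\in E(I_i)$ for all $i$; hence $E(G)=E(I_1)\cap\cdots\cap E(I_b)$, which is precisely $G = I_1\cap\cdots\cap I_b$. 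If $G$ is in fact realizable with fewer than $b$ dimensions, I would pad the list with copies of the complete graph on $V(G)$, which is an interval graph since all vertices can be assigned the same interval, and which leaves the intersection unchanged; this lets me always produce exactly $b$ interval graphs.

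For the reverse direction I would assume $G = I_1\cap\cdots\cap I_b$ with each $I_i$ an interval graph on vertex set $V(G)$, and fix an interval representation $f_i$ of each $I_i$. Setting $f(v):=f_1(v)\times f_2(v)\times\cdots\times f_b(v)$, an axis-parallel $b$-dimensional box, the same fact gives $f(u)\cap f(v)\neq\emptyset$ iff $f_i(u)\cap f_i(v)\neq\emptyset$ for all $i$, iff $(u,v)\in E(I_i)$ for all $i$, iff $(u,v)\in E(G)$. Thus $f$ exhibits $G$ as the intersection graph of axis-parallel $b$-dimensional boxes, so $\boxi(G)\leq b$.

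I do not expect a genuine obstacle here. The only points needing a little care are the verification of the coordinate-wise characterization of box intersection (immediate from the definition of a box as a Cartesian product of intervals, together with the fact recalled earlier that two intervals on the line are disjoint iff one lies entirely to the left of the other) and the small bookkeeping in the forward direction that upgrades ``at most $b$ interval graphs'' to ``exactly $b$ interval graphs''.
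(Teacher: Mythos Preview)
Your argument is correct and is the standard proof of this classical characterization. Note, however, that the paper does not supply its own proof of this lemma: it is stated with attribution to Roberts~\cite{Roberts} and used as a known result, so there is no in-paper argument to compare against.
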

From the above lemma, we can say that the boxicity of a 
graph $G$ is the minimum $b$ for which there exist $b$ interval graphs
$I_1,\ldots,I_b$ such that $G =I_1 \cap I_2 \cap \cdots \cap I_b$. Note that if
$G=I_1 \cap I_2 \cap \cdots \cap I_b$, then each $I_i$ is a supergraph
of $G$ and also for every pair of vertices $u,v\in V(G)$ such that $(u,v)\not\in
E(G)$, $(u,v) \notin E(I_i)$, for some $i$.

\ignore{
An edge $e=(u,v)$ of a bipartite graph $G$ is \emph{bisimplicial} 
if $N(u) \cup N(v)$ induces a complete bipartite subgraph of $G$. 
Let $|E(G)| = m$ and let $\sigma = [e_1,e_2, \ldots , e_m]$ be a 
sequence of all the edges of $G$. Let $G_0 = G$ and for any 
$1 \leq i \leq m$, let $G_i$ be the graph 
obtained after removing edge $e_i$ from $G_{i-1}$ i.e. $V(G_i) = 
V(G_{i-1})$ and $E(G_i) = E(G_{i-1})\setminus\{e_i\}$. So $V(G_1) = 
V(G_0)$ and $E(G_1) = E(G_0)\setminus\{e_1\}$. Also, note that $G_m$
will be a graph with no edges. We say that
 $\sigma$ is a  \emph{perfect edge without vertex elimination ordering} 
for $G$ if each edge $e_i$ is bisimplicial in the graph $G_{i-1}$ from 
which it is removed to get the graph $G_i$. The following lemma appears 
in \cite{BLS}.
\begin{lemma}\label{PEOlemma}
A graph $G$ is chordal bipartite if and only if it has a perfect 
edge without vertex elimination ordering. 
\end{lemma}
}
\subsection{Strongly Chordal Graphs and Chordal Bipartite Graphs}

A chordal graph is \emph{strongly chordal} if it 
does not contain any induced trampoline (refer \cite{Farber83}). 
Two vertices $u$ and $v$ in a graph are said to be \emph{compatible} 
if $N[u] \subseteq N[v]$ or vice versa. Otherwise they are said to be 
\emph{non-compatible}. A vertex $v$ in a graph $G$ is a \emph{simple vertex}
if for any $x,y \in N[v]$, $x$ and $y$ are compatible. An ordering $v_1,
\ldots, v_n$ of vertices of a graph $G$ is said to be a \emph{simple
elimination ordering} if
for each $i$, the vertex $v_i$ is a simple vertex in the graph induced by
the vertices $\{v_i,\ldots,v_n\}$ in $G$. The following characterization 
of strongly chordal graphs is from page 78 of \cite{BLS}. 
\begin{lemma}\com{[Brandstadt, Le, Spinrad \cite{BLS}]}\label{stronglemma}
A graph is strongly chordal if and only if it admits a simple elimination
ordering.
\end{lemma}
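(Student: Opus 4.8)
The plan is to prove the two implications of the equivalence separately; one of them becomes short once we record that a \emph{simple} vertex is automatically \emph{simplicial}. Indeed, if $x,y\in N[v]$ are compatible, say $N[x]\subseteq N[y]$, then $x\in N[x]\subseteq N[y]$ forces $x=y$ or $xy\in E(G)$; applying this to every pair in $N(v)$ shows $N[v]$ is a clique. Consequently a simple elimination ordering is in particular a perfect elimination ordering, so the underlying graph is chordal, and the only extra thing needed in that direction is to rule out an induced trampoline.

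\emph{Simple elimination ordering $\Rightarrow$ strongly chordal.} Let $v_1,\dots,v_n$ be a simple elimination ordering of $G$ and suppose, for contradiction, that $G$ has an induced $k$-sun $H$ with $k\ge 3$, hub vertices $w_1,\dots,w_k$ and rim vertices $u_1,\dots,u_k$, where $u_i$ is adjacent exactly to $w_{i-1}$ and $w_i$ (indices read mod $k$). Let $v$ be the vertex of $H$ appearing first in the ordering and let $G'$ be the induced subgraph of $G$ on the vertices appearing at $v$ or later, so that $v$ is simple in $G'$; since every vertex of $H$ survives into $G'$, $H$ is an induced subgraph of $G'$. If $v=u_j$, then $w_{j-1},w_j\in N[v]$ must be compatible in $G'$, say $N_{G'}[w_{j-1}]\subseteq N_{G'}[w_j]$; but $u_{j-1}\in N[w_{j-1}]$ then forces $u_{j-1}w_j\in E(H)$, which is impossible for $k\ge 3$. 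Symmetrically, if $v=w_j$, then $u_j,u_{j+1}\in N[v]$ must be compatible in $G'$, say $N_{G'}[u_j]\subseteq N_{G'}[u_{j+1}]$; but $w_{j-1}\in N[u_j]$ then forces $w_{j-1}u_{j+1}\in E(H)$, again impossible. So $G$ contains no induced trampoline and is strongly chordal.

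\emph{Strongly chordal $\Rightarrow$ simple elimination ordering.} Since chordality and the absence of an induced trampoline are both inherited by induced subgraphs, the class of strongly chordal graphs is hereditary; so by induction on $|V(G)|$ it suffices to prove that every nonempty strongly chordal graph has a simple vertex (take such a vertex first and recurse on the rest). To find one, I would start from a simplicial vertex $v$ of $G$ (which exists because $G$ is chordal) chosen so that $N[v]$ is minimal under inclusion among closed neighbourhoods of simplicial vertices; note $v$ is compatible with every vertex of $N[v]$, since $N[v]$ being a clique gives $N[v]\subseteq N[x]$ for each $x\in N[v]$. If $v$ is not simple, there are non-compatible $x,y\in N(v)$ and witnesses $a\in N[x]\setminus N[y]$, $b\in N[y]\setminus N[x]$, and one checks that $a,b\notin N[v]$, $a\ne b$, $xy\in E(G)$, $ay\notin E(G)$ and $bx\notin E(G)$. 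The plan is then to propagate this local obstruction around a cycle, at each step invoking chordality together with the minimality of $N[v]$ to forbid the ``wrong'' chords, until the configuration closes into an induced trampoline, contradicting strong chordality.

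The step I expect to be the main obstacle is exactly this last one: turning an incomparable pair $x,y\in N(v)$ into a bona fide induced trampoline, i.e., choosing the successive witnesses so that the hub vertices really do induce a clique, the rim vertices stay independent, and each rim vertex is joined to precisely two consecutive hubs. Minimality of the chosen simplicial neighbourhood is what must be exploited to exclude the shortcut edges that would otherwise collapse the would-be trampoline into a smaller configuration. Everything else — the heredity-plus-induction reduction, the fact that simple vertices are simplicial, and the first-vertex argument in the other direction — is routine.
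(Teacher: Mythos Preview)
The paper does not prove this lemma at all: it is quoted as a known characterization from \cite{BLS} (originally due to Farber~\cite{Farber83}), so there is no ``paper's own proof'' to compare against. Your write-up therefore goes well beyond what the paper attempts.

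On the substance of your attempt: the forward direction (simple elimination ordering $\Rightarrow$ strongly chordal) is fine. Your observation that simple vertices are simplicial is correct, and the first-vertex-of-the-sun argument cleanly rules out an induced $k$-sun for $k\ge 3$.

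The reverse direction, however, is only a plan, and you yourself flag the missing step. That step --- showing that a chordal, sun-free graph must contain a simple vertex --- is exactly the heart of Farber's theorem and is genuinely delicate. Starting from a simplicial vertex $v$ with $N[v]$ inclusion-minimal and an incompatible pair $x,y\in N(v)$ is a reasonable opening, but ``propagating around a cycle'' does not by itself produce an induced sun: you must simultaneously force the hub set to be complete, keep the rim independent, and prevent extra rim--hub edges, and minimality of $N[v]$ alone is not enough to kill all of these at once. Farber's original argument proceeds differently (via strong elimination orderings / totally balanced matrices) precisely because the direct trampoline-building route is awkward to close. As written, this direction has a real gap; if you want a self-contained proof you should either follow Farber's route or supply the full construction of the sun with all the non-edge verifications.
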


For a bipartite graph $G$ with bipartition $\{A,B\}$, we denote by $C_A(G)$
($C_B(G)$) the split graph obtained from $G$ by adding edges between every pair
of vertices in $A$ ($B$).
A \emph{split graph} is a 
graph in which the vertices can be partitioned into a clique and an 
independent set.
Recall that a bipartite graph is chordal bipartite if it does not have any  
induced cycle of length greater than 4. The following characterization of
chordal bipartite 
graphs appears in \cite{Brand}.\com{\cite{BLS}.}
\begin{lemma}\label{splitlemma}
Let $G$ be a bipartite graph with bipartition $\{A,B\}$.
\com{Let $G_A$ ($G_B$) be the split graph obtained from $G$ by 
adding edges between every pair of vertices in $A$ ($B$).} Then, $G$ is
chordal bipartite if and only if $C_A(G)$ is strongly chordal.
\com{ if and only if $C_B(G)$ is strongly chordal.}
\end{lemma}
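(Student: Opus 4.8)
The plan is to work with the paper's definition of strong chordality: $C_A(G)$ is strongly chordal precisely when it is chordal and has no induced trampoline (the complete $k$-sun, $k\ge 3$: a clique $U=\{u_1,\dots,u_k\}$ and an independent set $W=\{w_1,\dots,w_k\}$ with $w_i$ adjacent to exactly $u_i$ and $u_{i+1}$, indices mod $k$). The first observation, used throughout, is that $C_A(G)$ is a split graph --- its vertices split into the clique $A$ and the independent set $B$ --- and split graphs are chordal: an induced cycle of length at least $4$ can meet the clique side in at most two vertices (three would be pairwise adjacent and two of them non-consecutive on the cycle, giving a chord), hence it meets the independent side in two consecutive vertices (impossible), unless it has length exactly $4$, in which case its two clique-side vertices are adjacent and form a chord. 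So it suffices to prove: $G$ is chordal bipartite if and only if $C_A(G)$ has no induced trampoline.

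I would first dispatch the easy direction by contraposition. Suppose $G$ is not chordal bipartite, so it contains an induced cycle of length $2k$ with $k\ge 3$; being bipartite, this cycle alternates between the two sides, say it is $a_1b_1a_2b_2\cdots a_kb_ka_1$ with $a_i\in A$ and $b_i\in B$. Passing to $C_A(G)$ only adds edges inside $A$, so the $a_i$ now form a clique, the $b_i$ remain independent, and (the original cycle being induced in $G$) each $b_i$ is still adjacent to no $a_j$ except $a_i$ and $a_{i+1}$. Hence these $2k$ vertices induce a trampoline in $C_A(G)$, so $C_A(G)$ is not strongly chordal.

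The substantive direction is the converse, and the main obstacle is to pin down exactly how a trampoline can sit inside the split graph $C_A(G)$. Assume $G$ is chordal bipartite and, for contradiction, that $C_A(G)$ contains an induced trampoline $T$ with clique part $U$ and independent part $W$, $|U|=|W|=k\ge 3$. Since $U$ is a clique of size at least $3$ while $B$ is independent, at most one vertex of $U$ lies in $B$; similarly, since $W$ is independent while $A$ is a clique, at most one vertex of $W$ lies in $A$. I would then argue: (i) $U\subseteq A$ --- if some $u\in U$ were in $B$, its two spoke-neighbours in $W$ are distinct and cannot both be in $A$, so one of them is in $B$, producing an edge inside $B$, a contradiction; and (ii) $W\subseteq B$ --- if some $w\in W$ were in $A$, then (using $k\ge 3$) pick $u\in U$ non-adjacent to $w$ in $T$; by (i) we have $u\in A$, so $u$ and $w$ are adjacent in $C_A(G)$, contradicting that $T$ is induced. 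With $U\subseteq A$ and $W\subseteq B$ established, consider the subgraph of $G$ induced on $U\cup W$: it has no edges inside $U$ and none inside $W$, while for $u_i\in U$ and $w_j\in W$ the pair $u_iw_j$ is an edge of $G$ if and only if it is an edge of $C_A(G)$ (edges between $A$ and $B$ are untouched) if and only if $i\in\{j,j+1\}$. Thus $G[U\cup W]$ is an induced cycle of length $2k\ge 6$, contradicting that $G$ is chordal bipartite. Hence $C_A(G)$ has no induced trampoline, and being a (chordal) split graph it is strongly chordal.

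The only delicate point is the bookkeeping in steps (i)--(ii) separating genuine edges of $G$ from the edges added to build $C_A(G)$; everything else is a straight translation between trampolines in $C_A(G)$ and induced cycles of length at least $6$ in $G$. The argument is moreover insensitive to whether one demands that $U$ be a clique or merely that it induce a connected subgraph in the definition of a trampoline, since only the spoke edges and the clique structure of $A$ are actually used above.
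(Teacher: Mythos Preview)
The paper does not give its own proof of this lemma; it is quoted as a known characterization from Brandst\"adt~\cite{Brand} and used as a black box. So there is nothing to compare your argument against.

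That said, your proof is correct and self-contained. The translation between induced even cycles of length at least $6$ in $G$ and induced trampolines in the split graph $C_A(G)$ goes through cleanly in both directions, and your steps (i) and (ii) pinning $U\subseteq A$ and $W\subseteq B$ are exactly the right bookkeeping. Two small remarks: the preliminary observation that ``at most one vertex of $U$ lies in $B$'' is never actually used, since your argument for (i) rules out any such vertex directly via the spoke edges; and, as you note yourself, nothing in (i) or (ii) relies on $U$ being a clique, so the argument handles the more general notion of sun/trampoline without change.
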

\subsection{Bipartite Powers}
For any two vertices $u,v$ in a graph $G$, let 
$d_G(u,v)$ denote the length of a shortest $u\mbox{-}v$ path in $G$. 
Given a bipartite graph $G$ and an odd positive integer $k$, we define the
graph $G^{[k]}$ to be the graph with $V\left(G^{[k]}\right)=V(G)$ and
$E\left(G^{[k]}\right)=\{(u,v)
~|~ u,v\in V(G),\, d_G(u,v)\mbox{ is odd, and } d_G(u,v)\leq k\}$.
\com{For any odd positive integer 
$k$, we define the $k$-th \emph{bipartite 
power} of a bipartite graph $G$, denoted by $G^{[k]}$, as 
$V(G^{[k]}) = V(G)$ 
and $E(G^{[k]}) = \{(u,v)~|~u,v \in V(G) \mbox{ and } d_G(u,v) \leq 
k \mbox{ is an odd number}\}$.} The graph $G^{[k]}$ is called the
\emph{$k$-th bipartite power} of the bipartite graph $G$.
It is easy to see that if $G$ is a bipartite graph with the bipartition
$\{A,B\}$, then $G^{[k]}$ is also a bipartite graph with the bipartition
$\{A,B\}$.
\section{Bipartite powers of Trees}
Let $T$ be a rooted tree with vertex $r$ being its root. $T$ is 
therefore a bipartite graph and let $\{A,B\}$ be its bipartition. 
For any $u,v \in V(T)$, we say $u \preceq v$ in $T$, if $v \in rTu$.
Otherwise, we say $u \npreceq v$.
For $u,v\in V(T)$, we define $P(u,v):=\{x\in V(T)~|~u\preceq x\mbox{ and
}v\preceq x\}$. The \emph{least common ancestor} (LCA) of any two vertices
$u,v \in V(T)$ in $T$ is that vertex $z \in P(u,v)$ such that $\forall y\in
P(u,v), z\preceq y$. Note that if $z$ is the LCA of $u$ and $v$, then $z \in
uTv, z \in uTr$ and $z \in vTr$.
\com{For any $u,v \in V(T)$,}
We say that a vertex $u$ is \emph{farthest} from a vertex $v$ in $T$ if
$\forall w \in V(T)$, $d_T(v,w) \leq d_T(v,u)$. Note that in this case $u$ will 
be a leaf vertex in $T$. 
\begin{lemma}\label{Powerlemma}
Let $x \in V(T)$ such that $x$ is a leaf vertex in $T$. Then, 
$\left(T- \{x\}\right)^{[k]} = T^{[k]}- \{x\}$. 
\end{lemma}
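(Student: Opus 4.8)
The plan is to show that the two graphs $(T-\{x\})^{[k]}$ and $T^{[k]}-\{x\}$ have the same vertex set and the same edge set. Both are graphs on the vertex set $V(T)\setminus\{x\}$, so it suffices to compare their edges. By the definition of bipartite powers, $(u,v)\in E\left((T-\{x\})^{[k]}\right)$ if and only if $d_{T-\{x\}}(u,v)$ is odd and at most $k$, while $(u,v)\in E\left(T^{[k]}-\{x\}\right)$ if and only if $u,v\neq x$ and $d_T(u,v)$ is odd and at most $k$. Hence the whole lemma reduces to the single claim that $d_{T-\{x\}}(u,v)=d_T(u,v)$ for every pair $u,v\in V(T)\setminus\{x\}$.

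To prove this claim, I would first note that since $T$ is a tree and $x$ is a leaf, $T-\{x\}$ is again a tree, so distances in it are well defined. Now fix $u,v\in V(T)\setminus\{x\}$ and consider the unique path $uTv$ in $T$. If $x$ were an internal vertex of $uTv$, it would have two distinct neighbours on that path, contradicting the fact that $x$ is a leaf; and $x$ cannot be an endpoint of $uTv$ because $u,v\neq x$. Therefore $uTv$ avoids $x$ entirely, so it is also a path in $T-\{x\}$, which gives $d_{T-\{x\}}(u,v)\leq d_T(u,v)$. The reverse inequality is immediate since $T-\{x\}$ is a subgraph of $T$. Thus $d_{T-\{x\}}(u,v)=d_T(u,v)$, and combined with the edge-set equivalences noted above, this proves the lemma.

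There is essentially no hard step here; the only points requiring care are the degenerate cases where $u$ or $v$ equals $x$, which are ruled out by hypothesis, and the basic observation that a leaf never lies in the interior of a path between two other vertices. One could instead phrase the distance equality using the ancestor relation $\preceq$ and least common ancestors already introduced in the paper, but the direct argument exploiting the uniqueness of paths in a tree is shorter and more transparent.
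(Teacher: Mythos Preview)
Your proof is correct and follows essentially the same approach as the paper: the key observation in both is that for $u,v\neq x$ the leaf $x$ cannot lie on the unique path $uTv$, so distances between $u$ and $v$ coincide in $T$ and in $T-\{x\}$, whence the edge sets of the two bipartite powers agree. Your write-up is in fact more explicit than the paper's, which simply states ``since $x$ is a leaf vertex in $T$, $x\notin uTv$'' and appeals to $G$ being a subgraph of $G'$ for the reverse inclusion.
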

\begin{proof}
For ease of notation, let $G = \left(T- \{x\}\right)^{[k]}$ and 
$G' = T^{[k]}- \{x\}$. 
Let $(u,v)\in E(G')$. Since $x$ is a 
leaf vertex in $T$, $x \notin uTv$. Therefore, $(u,v) \in E(G)$. Hence, 
$(u,v)\in E(G') \Rightarrow (u,v) \in E(G)$. Also, $(u,v) \in E(G) \Rightarrow
(u,v)\in E(G')$ since $G$ is a subgraph of $G'$. Therefore, 
$(u,v) \in E(G) \Leftrightarrow (u,v)\in E(G')$. This proves the lemma.
\hfill \qed 
\end{proof}

\begin{lemma}\label{Stronglemma}
Let $x\in V(T)$ such that $x$ is farthest from $r$ in $T$. Assume that
$x\in A$. For any odd positive integer $k$, let $G:=C_B(T^{[k]})$.
\com{ be the 
split graph obtained from the bipartite graph $T^{[k]}$ by adding edges between
every pair of vertices in $B$.}
Then, $x$ is a simple vertex in $G$. 
\end{lemma}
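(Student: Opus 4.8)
\medskip\noindent\textbf{Proof plan.}\quad
The plan is to use that $G=C_B(T^{[k]})$ is a split graph, with $B$ a clique and $A$ an independent set, and to rephrase ``$x$ is simple in $G$'' as a chain condition on closed neighbourhoods. Since $x\in A$, the neighbourhood of $x$ in $T^{[k]}$ already lies inside $B$, so adding the clique on $B$ does not change it; hence $N_G[x]=\{x\}\cup N$, where $N:=\{u\in B~|~d_T(x,u)\le k\}$ (the oddness of $d_T(x,u)$ required for adjacency in $T^{[k]}$ is automatic because $x\in A$ and $u\in B$). Likewise, for every $u\in B$ we have $N_G[u]=B\cup S_u$ where $S_u:=\{a\in A~|~d_T(a,u)\le k\}$. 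By definition $x$ is simple in $G$ exactly when the closed neighbourhoods $\{N_G[y]~|~y\in N_G[x]\}$ are pairwise comparable under inclusion, i.e.\ form a chain. Since $N_G[u]=B\cup S_u\supseteq B\cup\{x\}\supseteq N_G[x]$ for every $u\in N$, the set $N_G[x]$ is the smallest of them, so it suffices to show that $\{S_u~|~u\in N\}$ is a chain.

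Write $h(v):=d_T(r,v)$ (the depth of $v$) and let $\mathrm{lca}(s,t)$ denote the LCA of $s,t$ in $T$; I would first recall the elementary identity $d_T(s,t)=h(s)+h(t)-2\,h(\mathrm{lca}(s,t))$. The claim that makes $\{S_u~|~u\in N\}$ a chain is: \emph{if $u,w\in N$ with $h(w)\le h(u)$, then $S_u\subseteq S_w$}. Suppose not and fix $a\in S_u\setminus S_w$, so $d_T(a,u)\le k$ and $d_T(a,w)\ge k+2$ (parity forces the gap to be even). Put $q:=\mathrm{lca}(a,u)$ and $p:=\mathrm{lca}(a,w)$. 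From $d_T(a,u)\le k$ the identity gives $h(q)\ge\frac{h(a)+h(u)-k}{2}$, and from $d_T(a,w)\ge k+2$ it gives $h(p)\le\frac{h(a)+h(w)-k-2}{2}$; subtracting and using $h(u)\ge h(w)$ yields $h(q)-h(p)\ge 1$. As $p$ and $q$ are both ancestors of $a$, they lie on $rTa$ and are comparable, so $p$ is a proper ancestor of $q$.

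This is the point at which the hypothesis that $x$ is farthest from $r$ — equivalently $h(v)\le h(x)$ for all $v\in V(T)$ — is used. First, $x$ must be a descendant of $q$: if not, then $\mathrm{lca}(x,u)$ would be a proper ancestor of $q$ (it is an ancestor of $u$, hence comparable with $q$, and it cannot be a descendant of $q$ since then $x$ would be one too), so $h(\mathrm{lca}(x,u))\le h(q)-1$, and the identity together with $h(q)\ge\frac{h(a)+h(u)-k}{2}$ and $h(a)\le h(x)$ would give $d_T(x,u)\ge h(x)-h(a)+k+2\ge k+2$, contradicting $u\in N$. So $x$, like $a$, lies below $q$, hence below the child $p^-$ of $p$ on the path $pTq$. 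Now $\mathrm{lca}(a,w)=p$ forces $w$ to lie outside the subtree rooted at $p^-$ — the subtree that contains both $a$ and $x$ — so $\mathrm{lca}(x,w)=p$ as well (this also holds in the degenerate case $w=p$, since then $p$ is an ancestor of both $a$ and $x$). Subtracting $d_T(x,w)=d_T(x,p)+d_T(p,w)$ from $d_T(a,w)=d_T(a,p)+d_T(p,w)$ gives $d_T(a,w)-d_T(x,w)=h(a)-h(x)\le 0$, whence $d_T(a,w)\le d_T(x,w)\le k$, contradicting $d_T(a,w)\ge k+2$. This proves the claim, so $\{N_G[y]~|~y\in N_G[x]\}$ is a chain and $x$ is simple in $G$.

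The step I expect to be the main obstacle is guessing the right way to linearly order $N$: ordering by $d_T(x,u)$, or by the level at which the $x$--$u$ path turns around, both fail; one is led to the plain depth $h(u)$ only after realising that $x$ being a deepest leaf pins both $x$ and every $a\in S_u$ strictly below $\mathrm{lca}(a,u)$, which is exactly what drives the two depth comparisons above.
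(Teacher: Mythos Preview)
Your proof is correct and follows essentially the same strategy as the paper: both order the vertices of $N_G[x]$ by depth $h(\cdot)=d_T(r,\cdot)$ and show that the deeper neighbour has the smaller closed neighbourhood, which is exactly the chain condition for simplicity. The only difference is in the bookkeeping of the containment step---the paper pivots on $\mathrm{lca}(u,w)$ and does a two-case split according to whether the target vertex lies below it, whereas you pivot on $q=\mathrm{lca}(a,u)$ and $p=\mathrm{lca}(a,w)$ and run everything through the depth--distance identity; the underlying idea and the use of ``$x$ is deepest'' are the same.
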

\begin{proof}
We shall prove this by proving that, in $G$, for any two vertices
$u_1,u_2 \in N[x]$, such that $d_T(r,u_1) \geq d_T(r,u_2)$, $N[u_1] \subseteq
N[u_2]$. That $x$ is farthest from $r$ in $T$ implies that $u_2\not=x$ (note
that if $d_T(r,u_1) = d_T(r,u_2)$, $u_1$ and $u_2$ are interchangeable).
Now, since $N[x]\cap A=\{x\}$, we have $u_2\in B$.
Let $v \in N[u_1]$ in $G$. If $v \in B$, then $v \in N[u_2]$ (since $B$ induces
a clique in $G$). When $v \notin B$, we split the proof into the 
two cases given below.

\noindent Let $w$ be the LCA of $u_1$ and $u_2$ in $T$.
\begin{case}
$v \preceq w$ in $T$ 
\end{case}
We know that since $u_1,u_2 \in N[x]$ in $G$, $d_T(u_1,x) \leq k$ and 
$d_T(u_2,x) \leq k$. It is easy to see that $w \in u_1Tx$ or 
$w \in u_2Tx$. If $w \in u_1Tx$ then, $d_T(u_1,w) + d_T(w,x) = 
d_T(u_1,x) \leq k$. Otherwise, $d_T(u_2,w) + d_T(w,x) = 
d_T(u_2,x) \leq k$. Since $d_T(r,u_1) \geq d_T(r,u_2)$ implies that
$d_T(u_1,w) \geq d_T(u_2,w)$, we always 
have $d_T(u_2,w) + d_T(w,x) \leq k$. We know that $d_T(r,x) \leq 
d_T(r,w) + d_T(w,x)$ and $d_T(r,v) = d_T(r,w) + d_T(w,v)$. Since 
$d_T(r,v) \leq d_T(r,x)$ (as $x$ is farthest from $r$ in $T$), we have
$d_T(w,v) \leq d_T(w,x)$. Therefore, 
$d_T(u_2,w) + d_T(w,v) \leq k$. Hence, $v \in N[u_2]$ in $G$.  
\begin{case}
$v \npreceq w$ in $T$
\end{case}
In this case, it is easy to see that $w \in u_1Tv$ and $w \in u_2Tv$.
This implies
that $d_T(u_1,v)=d_T(u_1,w)+d_T(w,v)$ and  $d_T(u_2,v)=d_T(u_2,w)+d_T(w,v)$.
Since $d_T(u_1,v) \leq k$ and $d_T(u_1,w) \geq d_T(u_2,w)$, we can conclude 
that $d_T(u_2,v)\leq k$. Therefore, $v \in N[u_2]$ in $G$.\vspace{0.1in}

This proves that $N[u_1] \subseteq N[u_2]$. Hence the lemma.
\hfill\qed
\end{proof}
\begin{theorem}\label{CBGPowertheorem}
For any odd positive integer $k$, $T^{[k]}$ is a CBG.
\end{theorem}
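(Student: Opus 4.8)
The plan is to prove the (equivalent) assertion that $C_B(T^{[k]})$ is strongly chordal; by Lemma~\ref{splitlemma}, applied with the two sides of the bipartition interchanged (which is harmless, since whether a bipartite graph is chordal bipartite does not depend on the naming of the sides), this is the same as saying that $T^{[k]}$ is a CBG. I would argue by induction on $|V(T)|$. The base case $|V(T)|=1$ is immediate, as a one-vertex graph has no induced cycles at all.

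For the inductive step, pick a vertex $x$ that is farthest from $r$ in $T$; as already observed such an $x$ is a leaf of $T$, and since we may assume $|V(T)|\ge 2$ we have $x\ne r$, so $T':=T-\{x\}$ is again a tree rooted at $r$. Interchanging the roles of $A$ and $B$ if necessary — the statement of the theorem and Lemma~\ref{Stronglemma} are both symmetric under this swap — assume $x\in A$. Then Lemma~\ref{Stronglemma} tells us that $x$ is a simple vertex of $G:=C_B(T^{[k]})$. The key observation is that deleting $x$ from $G$ returns precisely $C_B\big((T')^{[k]}\big)$: since $x\in A$ it is not one of the vertices on which $C_B$ places a clique, so $G-\{x\}=C_B\big(T^{[k]}-\{x\}\big)$, and $T^{[k]}-\{x\}=(T')^{[k]}$ by Lemma~\ref{Powerlemma}. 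By the induction hypothesis $(T')^{[k]}$ is a CBG, hence $C_B\big((T')^{[k]}\big)$ is strongly chordal, and so by Lemma~\ref{stronglemma} it has a simple elimination ordering $\sigma'$. Prefixing $x$ to $\sigma'$ yields an ordering of $V(G)$: the first vertex $x$ is simple in $G$ by Lemma~\ref{Stronglemma}, and for every later vertex $v$ the subgraph of $G$ induced by the vertices not preceding $v$ coincides with the corresponding induced subgraph of $G-\{x\}=C_B\big((T')^{[k]}\big)$, in which $v$ is simple by the choice of $\sigma'$. Thus $G$ admits a simple elimination ordering, so by Lemma~\ref{stronglemma} $G=C_B(T^{[k]})$ is strongly chordal, and therefore $T^{[k]}$ is a CBG. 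If instead $x\in B$, the same argument works with $C_A$ in place of $C_B$ throughout.

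This proof is essentially an assembly of the preceding lemmas; the one step that genuinely requires care is the commuting identity $C_B(T^{[k]})-\{x\}=C_B\big((T-\{x\})^{[k]}\big)$, i.e.\ that removing the chosen farthest leaf commutes both with taking the $k$-th bipartite power (Lemma~\ref{Powerlemma}) and with the $C_B$ construction — this is exactly what lets a simple elimination ordering produced by the induction hypothesis be extended to one of $C_B(T^{[k]})$. I would also be careful to spell out that both the theorem and Lemma~\ref{Stronglemma} are invariant under exchanging the two sides $A$ and $B$ of the bipartition, which is what legitimises reducing to the case $x\in A$, given that the farthest leaf may a priori lie in either side.
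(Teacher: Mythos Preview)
Your proof is correct and follows essentially the same approach as the paper's own proof: induction on $|V(T)|$, pick a leaf $x$ farthest from the root, use Lemma~\ref{Stronglemma} to see $x$ is simple in $C_B(T^{[k]})$, use Lemma~\ref{Powerlemma} to identify $G-\{x\}$ with $C_B\big((T-\{x\})^{[k]}\big)$, and combine Lemmas~\ref{stronglemma} and~\ref{splitlemma} to close the induction. You are merely more explicit than the paper about the base case, the construction of the simple elimination ordering by prefixing $x$, and the $A/B$ symmetry.
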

\begin{proof}
Let us prove this by using induction on the number of vertices of 
$T$. Let $x \in V(T)$ such that $x$ is farthest from $r$ in $T$.
Assume $x\in A$. Let $G:=C_B(T^{[k]})$.
\com{be the split graph obtained from the bipartite graph 
$T^{[k]}$ by connecting every pair of vertices in $B$.}
Then by Lemma \ref{Stronglemma}, $x$ is a simple vertex in $G$. Let 
$G' = G - \{x\}$. Note that $G'=C_B(T^{[k]}- \{x\})$.
\com{ is the split graph obtained from 
the bipartite graph $T^{[k]}- \{x\}$ by connecting every pair of vertices 
in $B$.}
But from Lemma \ref{Powerlemma}, $T^{[k]}- \{x\} = 
\left(T- \{x\}\right)^{[k]}$ which by our induction hypothesis 
is a CBG. Then, applying Lemma \ref{splitlemma}, we can say that $G'$ is 
a strongly chordal graph. Since $x$ is a simple vertex in $G$ and since 
$G' = G - \{x\}$, by applying Lemma \ref{stronglemma} we can say that 
$G$ is also a strongly chordal graph. Therefore by Lemma \ref{splitlemma}, 
$T^{[k]}$ is a CBG.\hfill\bbox
\end{proof}

\section{Boxicity of CBGs}
\begin{lemma}\label{Hellylemma}
In an interval graph $I$, let $S \subseteq V(I)$ be a set of 
vertices that induces a clique. Then for an interval representation $f$ of $I$,
$\exists y,z \in \mathbb{R}$ with $y \leq z$ such that $S$ overlaps in the
region $[y,z]$ in $f$.
\end{lemma}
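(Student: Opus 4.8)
The plan is to invoke the one-dimensional Helly property of intervals, realized concretely through the extremal endpoints of the intervals assigned to the vertices of $S$. I would put
\[
y := \max_{u \in S} l(f(u)) \qquad\text{and}\qquad z := \min_{u \in S} r(f(u)),
\]
which are well defined because $I$ is finite and $S$ is nonempty. Fix $u^\ast, v^\ast \in S$ attaining these extrema, so $l(f(u^\ast)) = y$ and $r(f(v^\ast)) = z$. The statement to be proved then splits into two parts: first that $y \le z$, and second that every $f(u)$ with $u \in S$ contains the region $[y,z]$.

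The second part is pure bookkeeping: by the defining maximality of $y$ and minimality of $z$, for every $u \in S$ we have $l(f(u)) \le y$ and $z \le r(f(u))$; hence, once $y \le z$ is known, $l(f(u)) \le y \le z \le r(f(u))$, which is exactly the condition for $S$ to overlap in the region $[y,z]$ in $f$.

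So the substance lies in the inequality $y \le z$, and this is where I would use that $S$ induces a clique. If $u^\ast = v^\ast$, then $y = l(f(u^\ast)) \le r(f(u^\ast)) = z$ because $f(u^\ast)$ is a genuine interval. Otherwise $(u^\ast, v^\ast) \in E(I)$, so $f(u^\ast) \cap f(v^\ast) \ne \emptyset$; by the characterization of disjointness of intervals recalled just above the lemma, this rules out $f(v^\ast) < f(u^\ast)$, i.e. it rules out $r(f(v^\ast)) < l(f(u^\ast))$, so $z = r(f(v^\ast)) \ge l(f(u^\ast)) = y$.

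I do not anticipate a genuine obstacle here, since this is simply Helly's theorem on the line. The only point worth stating carefully is that pairwise intersection over all of $S$ is not actually needed: intersection of the single pair $(u^\ast, v^\ast)$ realizing the two extremal endpoints already forces $y \le z$, and the degenerate case $|S| = 1$ is absorbed into the $u^\ast = v^\ast$ branch.
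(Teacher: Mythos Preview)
Your argument is correct and is exactly the Helly property the paper invokes; the paper's proof is a one-line appeal to Helly, and you have simply written out that appeal in full via the max--min endpoint construction.
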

\begin{proof}
Proof of the lemma follows directly from the Helly property for intervals on
the real line.
\hfill\qed
\end{proof}
\begin{lemma}\label{CoBiplemma}
Let $G$ be a bipartite graph with bipartition $\{A,B\}$. 
Let $G'$ be the graph with 
$V(G')=V(G)=V$ and $E(G') = E(G) \cup \{(u,v)~|~u,v\in 
A \mbox{ and } u \neq v\} \cup \{(u,v)~|~u,v\in 
B \mbox{ and } u \neq v\}$. Then, 
$\boxi(G) \geq \frac{\boxi(G')}{2}$.
\end{lemma}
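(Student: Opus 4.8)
The plan is to establish the equivalent inequality $\boxi(G') \leq 2\,\boxi(G)$. Write $b := \boxi(G)$ and, by Lemma~\ref{robertslem}, fix interval graphs $I_1,\dots,I_b$ with $V(I_i)=V$ and $G = I_1 \cap \cdots \cap I_b$, together with an interval representation $f_i$ of each $I_i$. From each $I_i$ I will manufacture \emph{two} interval graphs $L_i$ and $L_i'$ and then show that $G' = \bigcap_{i=1}^{b}\bigl(L_i \cap L_i'\bigr)$; since this exhibits $G'$ as an intersection of $2b$ interval graphs, Lemma~\ref{robertslem} applied to this list immediately gives $\boxi(G') \leq 2b$.

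The construction is as follows. Pick constants $m,M$ with $m < l(f_i(v))$ and $M > r(f_i(v))$ for every $i$ and every $v\in V$. In $L_i$, assign to each $v\in A$ the interval $[\,l(f_i(v)),\,M\,]$ and to each $v\in B$ the interval $[\,m,\,r(f_i(v))\,]$. In $L_i'$ do exactly the reverse: each $v\in A$ gets $[\,m,\,r(f_i(v))\,]$ and each $v\in B$ gets $[\,l(f_i(v)),\,M\,]$. The choice of $m$ and $M$ guarantees these are genuine closed intervals, so $L_i$ and $L_i'$ are interval graphs.

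It remains to verify $E\bigl(\bigcap_i (L_i\cap L_i')\bigr)=E(G')$, and this is a short case check. Any two vertices of $A$ are adjacent in $L_i$ (their intervals both contain $M$) and any two vertices of $B$ are adjacent in $L_i$ (their intervals both contain $m$); the same holds in $L_i'$. Hence in $H := \bigcap_i (L_i\cap L_i')$ every pair within $A$ and every pair within $B$ is an edge — precisely the edges that $G'$ adds to $G$. Now take $u\in A$ and $v\in B$. In $L_i$, the intervals of $u$ and $v$ meet iff $l(f_i(u)) \leq r(f_i(v))$; in $L_i'$ they meet iff $l(f_i(v)) \leq r(f_i(u))$. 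Therefore $(u,v)\in E(L_i)\cap E(L_i')$ iff both inequalities hold, i.e. iff $f_i(u)\cap f_i(v)\neq\emptyset$, i.e. iff $(u,v)\in E(I_i)$. Intersecting over all $i$, such a cross pair lies in $E(H)$ iff it lies in $E(I_i)$ for every $i$, i.e. iff it lies in $E(G)$. Since $G$ is bipartite, every edge of $G$ is a cross pair, so $E(H) = E(G) \cup \{(u,v): u,v\in A, u\neq v\} \cup \{(u,v): u,v\in B, u\neq v\} = E(G')$.

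There is no real obstacle here beyond finding the right idea: the crux is splitting each $I_i$ into a ``left-anchored'' copy $L_i$ and a ``right-anchored'' copy $L_i'$, which simultaneously collapses $A$ and $B$ into cliques in each copy while retaining, across the pair $\{L_i, L_i'\}$, exactly the cross-adjacency information of $I_i$ — so a cross non-edge of $G'$, which is a non-edge of some $I_i$, remains a non-edge of at least one of $L_i,L_i'$. The only point needing a little care is the truncation of the ``rays'' $[\,l(f_i(v)),\,M\,]$ and $[\,m,\,r(f_i(v))\,]$ to honest bounded closed intervals without creating or destroying any intersection, which the choice of $m$ and $M$ takes care of.
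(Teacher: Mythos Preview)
Your proof is correct and follows essentially the same approach as the paper: from each $I_i$ you build two interval supergraphs by extending the intervals of $A$ to one side and of $B$ to the other (and then swapping), so that $A$ and $B$ become cliques in each copy while the pair of copies together retains exactly the cross-adjacencies of $I_i$. The only cosmetic difference is that the paper uses the per-$i$ extremal endpoints $s_i=\min_x l(f_i(x))$ and $t_i=\max_x r(f_i(x))$ in place of your global $m,M$, which changes nothing.
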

\begin{proof}
Let $\boxi(G)=b$. Then by Lemma \ref{robertslem} we have a 
set of interval graphs, say $\mathcal{I} = \{I_1, I_2, \ldots , I_b\}$ with
$V(I_1)=\cdots =V(I_b)=V$, such that $G = I_1 \cap I_2 \cap \cdots \cap 
I_b$. As each $I_i$ is an interval graph, there exists an interval
representation $f_i$ for it.
\com{Let $f_1,\ldots,f_b$ be functions such
that $f_i$ is an interval representation of the interval graph $I_i$.}
For each $i$, let $s_i = \min_{x \in V}
l(f_i(x))$ and $t_i = \max_{x \in V} r(f_i(x))$. Corresponding to each
interval graph $I_i$ in $\mathcal{I}$, we construct two interval graphs $I_i'$
and $I_i''$. We construct the interval representations $f_i'$ and $f_i''$ for
$I_i'$ and $I_i''$respectively from $f_i$ as follows:\vspace{0.1in}

\noindent Construction of $f_i'$:
\begin{tabbing}
~~~~~~~~~~~\=\kill
\>$\forall u \in A$, $f_i'(u) = [s_i,r(f_i(u))]$.\\
\>$\forall u \in B$, $f_i'(u) = [l(f_i(u)),t_i]$.
\end{tabbing}

\noindent Construction of $f_i''$:
\begin{tabbing}
~~~~~~~~~~~\=\kill
\>$\forall u \in A$, $f_i''(u) = [l(f_i(u)),t_i]$.\\
\>$\forall u \in B$, $f_i''(u) = [s_i,r(f_i(u))]$.
\end{tabbing}

Let $\mathcal{I'} = \{I_1',\,\ldots ,\,I_b',\,I_1'',\,\ldots,\, I_b''\}$. 
Now we claim that $G' = \bigcap_{I\in \mathcal{I'}} I$.

\com{$I_1' \cap \cdots \cap I_b' \cap I_1'' 
\cap \cdots \cap I_b''$.}
Let $i\in \mathbb{N}$ such that $1\leq i\leq b$. Since $A$ overlaps
in the region $[s_i,s_i]$ in
$f_i'$, the vertices in $A$ induce a clique in $I_i'$.
Similarly, $B$ overlaps in the region $[t_i,t_i]$ in $f_i'$ and
hence the vertices in $B$ also induce a clique in $I_i'$.
Also, for any $u \in A,\,v \in B$, $(u,v) \in E(G') \Rightarrow (u,v) \in E(G)
\Rightarrow (u,v) \in E(I_i) \Rightarrow (u,v) \in 
E(I_i')$ (since $\forall u\in V,\,f_i(u)\subseteq f_i'(u)$).
\com{(follows from the way $I_i'$ is constructed from 
$I_i$.)}
Hence each $I_i'$ is a supergraph of $G'$. It can be shown by proceeding
along similar lines that each $I_i''$ is a supergraph of $G'$. Therefore,
each $I \in \mathcal{I'}$ is a supergraph of $G'$.

Let $u,v\in V$ such that $(u,v)\not\in E(G')$. Both $u$ and $v$ together cannot
be in $A$ or $B$ since both $A$ and $B$ induce cliques in $G'$. Assume without
loss of generality that $u\in A$ and $v\in B$.
Now, $(u,v) \notin E(G') \Rightarrow (u,v) \notin E(G)$ (follows from the
way $G'$ is constructed from $G$). Since $(u,v) \notin E(G)$, 
$\exists I_i \in \mathcal{I}$ such that $(u,v) \notin E(I_i)$, 
i.e. $f_i(u) \cap f_i(v) = \emptyset$. If $f_i(u)<f_i(v)$ then
$f_i'(u)< f_i'(v)$ and therefore $(u,v) \notin E(I_i')$. 
Otherwise, if $f_i(v) < f_i(u)$ then $f_i''(v)<f_i''(u)$ and therefore $(u,v)
\notin E(I_i'')$. To summarise, for any $(u,v) 
\notin E(G')$, $\exists I \in \mathcal{I'}$ such that $(u,v) 
\notin E(I)$. 

Hence we prove the claim that $G' = \bigcap_{I\in \mathcal{I'}} I$.
\com{$I_1' \cap\cdots \cap I_b' \cap I_1''\cap \cdots \cap I_b''$.}
By Lemma \ref{robertslem}, this means that $\boxi(G') \leq 2b = 2\cdot
\boxi(G)$.
\hfill\qed
\end{proof}

\begin{figure}[h]
\begin{center}
\input{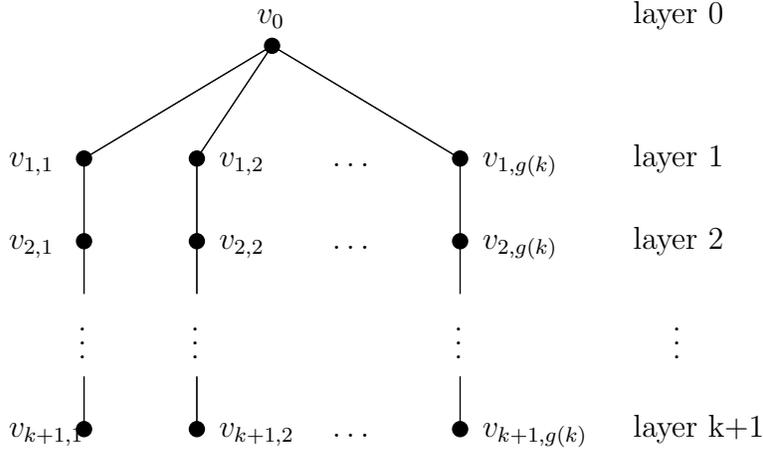}
\caption{Tree $T_k$}
\label{TkFigure}
\end{center}
\end{figure}
\vspace{0.1in}
Let $T_k$ be the tree shown in figure
\ref{TkFigure}. Here $k \in \mathbb{N}$ 
is an odd number and $g(k) = \frac{k+1}{2}\cdot\left(g(k-2) -1\right)+1$ 
with $g(1)=2$. Let 
$G_k = T_k^{[k]}$. It follows from Theorem \ref{CBGPowertheorem} that  
$G_k$ is a CBG. Let $L_i = \{v_{i,1}, v_{i,2}, \ldots , v_{i,g(k)}\}$ denote
the set of all vertices in layer $i$ of $T_k$. Note that $T_k$, and
consequently $G_k$, is a bipartite graph with the bipartition $\{A,B\}$
where $A = \{u \in L_i~|~0 \leq i \leq k+1,\, i \mbox{ is an odd number}\}$ 
and $B = \{u \in L_i~|~0 \leq i \leq k+1,\, i \mbox{ is an even 
number}\}$.
\vspace{0.1in}

Let $G_k'$ be the graph with $V(G_k')=V(G_k)=V(T_k)$ and 
$E(G_k')=E(G_k) \cup$\linebreak $\{(u,w)~|~u,w\in A  
\mbox{ and } u \neq w\} \cup \{(u, w)~|~u,w\in B \mbox{ and } u\neq w\}$. 
So in $G_k'$, $A$ and $B$ induce cliques. 
\begin{lemma}\label{auxlemma}
$\boxi(G_k') > \frac{k+1}{2}$
\end{lemma}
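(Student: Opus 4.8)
The plan is to exhibit a large clique-like obstruction in $G_k'$ that forces many interval graphs in any box representation. Specifically, I would look at the vertices of the tree $T_k$ that lie along a single ``branch'' — say the root $v_0$ together with one representative vertex $v_{i,j}$ from each layer $i$ for $1 \le i \le k+1$, chosen so that they form a path in $T_k$. Along such a path, consecutive vertices are at distance $1$ and the vertices $v_0 = $ (layer $0$) and layer-$i$ vertex are at distance $i$; the odd-distance pairs among them are exactly those whose layer indices differ by an odd number, and all such distances are at most $k+1 > k$ only for the extreme pair, so within the first $k+1$ layers every odd-distance pair at distance $\le k$ becomes an edge of $G_k$. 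Passing to $G_k'$, the two bipartition classes $A$ and $B$ become cliques, so \emph{every} pair of these path-vertices in the same class is adjacent; combined with the edges coming from $G_k$, I expect the whole branch (or a carefully chosen subset of it of size roughly $k+1$) to induce a graph whose non-edges are very restricted — essentially only pairs at even distance exceeding some bound — and this near-clique structure, together with the many disjoint branches (there are $g(k)$ of them), is what I would exploit.

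The key mechanism is Lemma~\ref{Hellylemma} together with a counting argument in the style of Roberts' original lower bound for complete multipartite graphs. In each interval graph $I_t$ of a hypothetical representation $G_k' = I_1 \cap \cdots \cap I_b$, a clique of $G_k'$ must overlap in a common region. The set $A$ is a clique and $B$ is a clique in $G_k'$, so in each $I_t$ there is a point $y_t^A$ common to all $A$-intervals and a point $y_t^B$ common to all $B$-intervals. For each non-edge $(u,v)$ of $G_k'$ (which must be $u \in A$, $v \in B$ with $d_{T_k}(u,v)$ either even or odd-and-$>k$), some $I_t$ separates $u$ from $v$; I would argue that within that $I_t$ the separation, combined with the Helly points for the cliques $A$ and $B$ restricted to suitable sub-cliques, can ``charge'' only a bounded number of the critical non-edges. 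More precisely, I would identify, for each branch $p$ (there are $g(k)$ of them) and each ``level pair'' that is a non-edge, a witnessing interval graph, and then count: each of the $b = \boxi(G_k')$ interval graphs can be the witness for only about $2$ branches per level-pair (the left-most and right-most in the relevant linear order around the Helly point), so $b$ must grow with the number of branches divided by a function of $k$ — and the definition $g(k) = \frac{k+1}{2}(g(k-2)-1)+1$ is precisely tuned so that this ratio is $> \frac{k+1}{2}$.

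Concretely I would set up the induction/recursion hidden in $g$: fix the critical layer pair, say layers $0$ and $k+1$ (distance $k+1$, which is \emph{even} since $k$ is odd, hence a non-edge in $G_k'$ because both $v_0$ and each $v_{k+1,j}$ lie in $B$ — wait, $v_0\in B$ iff $0$ even, yes, and $k+1$ even, so $v_{k+1,j}\in B$, so this pair is forced non-adjacent only because... actually $B$ is a clique, contradiction). So instead the relevant non-edges are between $v_0$ (or a layer-$1$ vertex) and the layer-$(k+1)$ vertices across \emph{different} branches: two leaves $v_{k+1,j}$ and $v_{k+1,j'}$ in different branches are at distance $2(k+1) > k$ through the root, even, hence non-adjacent in $G_k$, but since both are in $B$ they \emph{are} adjacent in $G_k'$. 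The genuine non-edges in $G_k'$ are $A$–$B$ pairs at large even distance, e.g.\ a layer-$1$ vertex $v_{1,j}$ and a layer-$(k+1)$ leaf $v_{k+1,j'}$ in a different branch, at distance $k+2$ (even, $>k$). For each branch $j$, the $g(k)$ choices of $j'$ give many such non-edges, and in any single $I_t$, the $A$-clique Helly point $y_t^A$ lies in $v_{1,j}$'s interval for every $j$; for the non-edge to be realized, $v_{k+1,j'}$'s interval must avoid some $v_{1,j}$'s interval, i.e.\ lie entirely to one side; only boundedly many branches $j'$ can simultaneously have their layer-$(k+1)$ interval pushed past the ``innermost'' layer-$1$ interval on a given side. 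Iterating this layer by layer (using distance thresholds $k, k-2, k-4, \ldots$ to get nested families of non-edges, which is where the recursion $g(k-2)$ enters) yields the bound $\boxi(G_k') > \frac{k+1}{2}$.

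The main obstacle I anticipate is making the ``charging''/counting precise: one must carefully track, across all $b$ interval graphs simultaneously, which non-edges each graph can witness, and ensure the bound per graph per level is genuinely $O(1)$ (essentially $2$, for the two sides of a Helly point) rather than growing with $k$. The bookkeeping of the recursive structure — peeling off one layer at a time, each time halving the ``budget'' available and invoking the recurrence defining $g$ — is delicate and is where I would expect to spend most of the effort; getting the off-by-one constants right so that the final inequality is strict ($> \frac{k+1}{2}$ rather than $\ge$) will require care with the base case $g(1) = 2$.
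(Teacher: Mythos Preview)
Your outline identifies the right ingredients---the relevant non-edges are indeed between a layer-$1$ vertex $v_{1,j}$ and a layer-$(k+1)$ vertex $v_{k+1,j'}$ with $j\neq j'$, the Helly points for the cliques $A$, $B$, $L_1$ are the right anchors, and the recursion encoded in $g(k)=\frac{k+1}{2}(g(k-2)-1)+1$ is exactly what drives the induction. But your proposed counting mechanism is wrong. You assert that ``each of the $b$ interval graphs can be the witness for only about $2$ branches per level-pair (the left-most and right-most \ldots)''. This is false: in a single interval graph $I_t$, \emph{arbitrarily many} of the intervals $f_t(v_{k+1,j'})$ can lie entirely to the right of the Helly region $[s_t,t_t]$ of $L_1$ (they all contain the Helly point of $B$, which is on one fixed side). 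So one interval graph can witness separations for \emph{all} $g(k)$ branches simultaneously, and a per-graph bound of $O(1)$ does not exist.

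The paper's argument exploits precisely this concentration rather than fighting it. By pigeonhole, since $|L_{k+1}|=g(k)=r(g(k-2)-1)+1$ with $r=\frac{k+1}{2}$, some single $I_1$ witnesses the separation for at least $g(k-2)$ of the leaves, say $v_{k+1,1},\ldots,v_{k+1,g(k-2)}$. The crucial step you are missing is then geometric: because all those leaf intervals miss $[s_1,t_1]$ yet must contain the $B$-Helly point $y_B$, they are all forced to the \emph{same} side of $[s_1,t_1]$; consequently every vertex in layers $2,\ldots,k$ of those $g(k-2)$ branches (each adjacent both to something in $L_1$ and to its own leaf) has an interval that straddles the point $t_1$. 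Hence the entire ``middle'' vertex set $A'\cup B'$ is a clique in $I_1$, so $I_1$ contributes nothing to the representation of the subgraph they induce. That subgraph is isomorphic to $X_{k-2}'$, now represented by $r-1$ interval graphs, contradicting the inductive hypothesis. Your plan has the recursion but not this ``one interval graph becomes redundant for the inner copy'' step, which is the heart of the proof.
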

\begin{proof}
Let $X_k':=G_k'-\{v_0\}$. Since $X_k'$ is an induced subgraph of $G_k'$, 
$\boxi(G_k') \geq \boxi(X_k')$. Now, we prove that $\boxi(X_k') > 
\frac{k+1}{2}$ by using induction on $k$. Since $X_1'$ is precisely 
a cycle of length 4, it is not a chordal graph. Therefore, by Lemma 
\ref{lekkerlem}, $\boxi(X_1') >\com{ \frac{1+1}{2} =} 1$ and thus our
induction hypothesis holds for the case $k=1$. Let $m \in 
\mathbb{N}$ be an odd number. Let us assume that our claim is true 
when $k$ is an odd number and $k<m$. Now, when $k=m$, we need 
to prove that $\boxi(X_m') > \frac{m+1}{2}$. We prove this by 
contradiction. Assume $\boxi(X_m') \leq r = \frac{m+1}{2}$. Then 
by Lemma \ref{robertslem}, there exists a set of $r$ interval 
graphs, say $\mathcal{I} = \{I_1, I_2, \ldots , I_r\}$, such 
that $X_m' = I_1 \cap I_2 \cap \cdots \cap I_r$. As each $I_i$ is an interval
graph, there exists an interval representation $f_i$ for it.
\com{Let $f_1,\ldots,f_r$ be
functions such that for each $i$, $f_i$ is an interval representation of $I_i$.}

Since the vertices of $L_1$ induce a clique in $X_m'$, they also 
induce a clique in each $I_i \in \mathcal{I}$. 
Let $[s_i,t_i]=\bigcap_{u\in L_1} f_i(u)$. Lemma \ref{Hellylemma} guarantees
that $[s_i,t_i]\not=\emptyset$.
\com{So by Lemma 
\ref{Hellylemma}, associated with each interval graph $I_i$, 
$\exists s_i, t_i \in \mathbb{R}$ with $s_i \leq t_i$ such that 
$L_1$ overlaps in the region $[s_i,t_i]$ in $f_i$.}
We know 
that for each $v_{m+1,p} \in L_{m+1}$, there exists $v_{1,q} \in L_1$ with 
$p \neq q$ such that $(v_{m+1,p},v_{1,q}) \notin E(X_m')$. So for each
$v_{m+1,p} \in L_{m+1}$, there exists some $I_j \in \mathcal{I}$ 
such that $(v_{m+1,p},v_{1,q}) \notin E(I_j)$. Therefore,
$f_j(v_{m+1,p}) \cap [s_j,t_j] = \emptyset$. 

Now let us partition $L_{m+1}$ into $r$ sets 
$P_1, P_2, \ldots , P_r$ such that $P_i = \{u \in L_{m+1}~|~
f_i(u) \cap [s_i,t_i] = \emptyset \mbox{ and for any 
} j < i,\, f_j(u) \cap [s_j,t_j] \neq \emptyset\}$. Since 
$|L_{m+1}| = g(m)=r\cdot \left( g(m-2)-1 \right) + 1$, there exists 
some $P_j$ such that $|P_j| \geq g(m-2)$. Assume $P_j = P_1$. 
Without loss of generality, let us also assume that $v_{m+1,1},
v_{m+1,2}, $ $\ldots ,v_{m+1,g(m-2)} \in P_1$. So $f_1(v_{m+1,1})
\cap [s_1,t_1] = \emptyset,\, f_1(v_{m+1,2}) \cap 
 [s_1,t_1] = \emptyset,\, \ldots ,$ $f_1(v_{m+1,g(m-2)}) 
\cap  [s_1,t_1] = \emptyset$. Since $X_m' = I_1 \cap I_2 \cap 
\cdots \cap I_r$, both the sets $A$ and $B$ which induce cliques in 
$X_m'$ also induce cliques in $I_1$.
Let $[y_A,z_A]=\bigcap_{u\in A} f_1(u)$ and $[y_B,z_B]=\bigcap_{u\in B}
f_1(u)$. By Lemma \ref{Hellylemma}, $[y_A,z_A]\not=\emptyset$ and
$[y_B,z_B]\not=\emptyset$.
\com{
So by Lemma \ref{Hellylemma}, 
 $\exists y_A, z_A \in \mathbb{R}$ with $y_A \leq z_A$ such that 
$A$ overlaps in the region $[y_A,z_A]$ in $f_1$. Similarily, 
 $\exists y_B, z_B \in \mathbb{R}$ with $y_B \leq z_B$ such that 
$B$ overlaps in the region $[y_B,z_B]$ in $f_1$.}
Since $I_1$ is not a complete graph (recall that $f_1(v_{m+1,1})\cap [s_1,t_1]
=\emptyset$ and therefore $v_{m+1,1}$ is not adjacent to some vertex in $L_1$
in $I_1$), $[y_A,z_A]\cap [y_B,z_B]=\emptyset$ implying that
either $z_A<y_B$ or $z_B<y_A$. Assume $z_A < y_B$ (the proof is similar
even otherwise). Therefore we have, 
\begin{equation}
\label{ineq1}
y_A \leq z_A  <  y_B \leq z_B.
\end{equation}
Since $L_1 \subseteq A$, we have 
\begin{equation}
\label{ineq2}
s_1 \leq y_A  \leq  z_A  \leq t_1.
\end{equation}
For any $1 \leq i \leq m+1$, let $L_i' = \{v_{i,1}, v_{i,2}, \ldots , 
v_{i,g(m-2)}\}$. 
Since $L_{m+1}' \subseteq L_{m+1} \subseteq B$, for any $u \in L_{m+1}'$, 
\begin{equation}
\label{ineq3}
l(f_1(u)) \leq y_B \leq z_B \leq r(f_1(u)).
\end{equation}
From inequalities \ref{ineq1}, \ref{ineq2} and \ref{ineq3}, we can
 see that for any $u \in L_{m+1}'$, 
\begin{equation}
\label{ineq4}
s_1  <  r(f_1(u)).
\end{equation}
Note that $L_{m+1}' \subseteq P_1$ and hence for any 
$u \in L_{m+1}'$, $f_1(u) \cap [s_1,t_1] = \emptyset$, i.e. 
either 
\[t_1 < l(f_1(u)) \mbox{ or } r(f_1(u)) < s_1\] 
is true. From inequality \ref{ineq4}, we then conclude that for any $u \in
L_{m+1}'$,
\begin{equation}
\label{ineq5}
t_1  <  l(f_1(u)).
\end{equation}
Let $l_{min}=\min_{u \in L_{m+1}'} 
l(f_1(u))$. Combining inequalities \ref{ineq2}, \ref{ineq3} and \ref{ineq5}, we
have 
\begin{equation}
\label{mainineq}
s_1 \leq y_A \leq z_A \leq t_1 < l_{min} \leq y_B \leq z_B.
\end{equation}
Let $A' = \{u \in L_i'~|~2\leq i \leq m \mbox{ and } i \mbox{ is 
an odd number}\}$ and  $B' = \{u \in L_i'~|~2\leq i \leq m \mbox{ 
and } i \mbox{ is an even number}\}$. Clearly, $A' \subseteq A$ 
($B' \subseteq B$) and therefore it overlaps in the region 
$[y_A,z_A]$ ($[y_B,z_B]$) in $f_1$. For any $v_{i,j} \in 
A'$, since $(v_{i,j}, v_{m+1,j}) \in E(X_m') \subseteq E(I_1)$, 
\begin{equation}
\label{ineq6}
l(f_1(v_{i,j})) \leq y_A \leq z_A \leq t_1 < l_{min} \leq l(f_1(v_{m+1,j})) \leq
r(f_1(v_{i,j})).
\end{equation}
Also, since for any $u\in B', w \in L_1$, $(u,w) \in E(X_m') \subseteq E(I_1)$,
we have for any $u\in B'$, $f_1(u)\cap [s_1,t_1]\not=\emptyset$ and therefore,
\begin{equation}
\label{ineq7}
l(f_1(u)) \leq t_1 < l_{min} \leq y_B \leq z_B \leq r(f_1(u)).
\end{equation}
From inequalities \ref{ineq6} and \ref{ineq7}, we can say that 
$A' \cup B'$ overlaps in the region $[t_1,t_1]$ in $f_1$. 
Hence in $I_1$, $A' \cup B'$ induces a clique. Now, 
we claim that the graph induced by $A'\cup B'$ in  
$X_m'$, say $Z$, is isomorphic to $X_{m-2}'$. Let $V(X_{m-2}') = 
\{\overline{v}_{1,1}, \ldots ,\overline{v}_{1,g(m-2)}, 
\overline{v}_{2,1}, \ldots ,\overline{v}_{2,g(m-2)}, \ldots,
\overline{v}_{m-1,1}, \ldots ,\overline{v}_{m-1,g(m-2)}\}$. 
Then it can be easily verified that this isomorphism is 
given by the mapping $h:V(Z) \rightarrow V(X_{m-2}')$ where, for any 
$v_{i,j} \in V(Z)$, $h(v_{i,j}) = \overline{v}_{i-1,j}$. Since $X_m'
= I_1 \cap \cdots \cap I_r$, $Z$ is the graph induced by 
$A'\cup B'$ in $I_1 \cap \cdots \cap I_r$. But, 
we have showed that $A'\cup B'$ induces a clique 
in $I_1$ which means that $(u,w)\in E(Z)\Leftrightarrow (u,w)\in
E(I_2\cap\cdots\cap I_r)$. Hence, $Z$ is the graph induced by 
$A'\cup B'$ in $I_2 \cap \cdots \cap I_r$. 
 Therefore, $\frac{m-1}{2}=r-1 \geq \boxi(I_2\cap\cdots\cap I_r)\geq \boxi(Z) 
= \boxi(X_{m-2}')$. But this contradicts our induction 
hypothesis that $\boxi(X_{m-2}') > \frac{m-1}{2}$. Hence 
we prove the lemma.
\hfill \qed 
\end{proof}
From Lemma \ref{CoBiplemma} and Lemma \ref{auxlemma}, we get the 
following lemma. 
\begin{lemma}\label{CBGBoxlemma}
$\boxi(G_k) > \frac{k+1}{4}$.
\end{lemma}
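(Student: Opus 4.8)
The plan is to derive this immediately by combining the two preceding lemmas, after first relating $G_k'$ to $G_k$ in the form demanded by Lemma \ref{CoBiplemma}. First I would observe that $G_k$ is a bipartite graph with the bipartition $\{A,B\}$ already fixed above, and that the graph $G_k'$ was defined precisely as the graph on the same vertex set whose edge set is $E(G_k)$ augmented by all pairs inside $A$ and all pairs inside $B$. This is exactly the operation $G \mapsto G'$ in the statement of Lemma \ref{CoBiplemma}, with $G = G_k$ and $G' = G_k'$. Hence Lemma \ref{CoBiplemma} applies verbatim and yields $\boxi(G_k) \geq \frac{\boxi(G_k')}{2}$.

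Next I would invoke Lemma \ref{auxlemma}, which gives $\boxi(G_k') > \frac{k+1}{2}$. Substituting this into the inequality from the previous step gives
\begin{equation*}
\boxi(G_k) \;\geq\; \frac{\boxi(G_k')}{2} \;>\; \frac{1}{2}\cdot\frac{k+1}{2} \;=\; \frac{k+1}{4},
\end{equation*}
which is the desired conclusion. The only thing worth spelling out carefully is the verification that the hypotheses of Lemma \ref{CoBiplemma} are met — in particular that the bipartition $\{A,B\}$ used in defining $G_k'$ is indeed a valid bipartition of $G_k$ (both $A$ and $B$ induce independent sets in $G_k$), which follows from the remark made just after the definition of bipartite powers that $T_k^{[k]}$ is bipartite with bipartition $\{A,B\}$, together with the explicit description of $A$ and $B$ as unions of odd-indexed and even-indexed layers of $T_k$.

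There is essentially no obstacle here: the lemma is a one-line corollary, and all the real work has already been done in Lemma \ref{auxlemma} (the inductive boxicity lower bound for $X_m'$) and Lemma \ref{CoBiplemma} (the general halving bound). The only minor care needed is bookkeeping — making sure the $G \mapsto G'$ correspondence is applied with the correct graph and bipartition — but this is immediate from the definitions of $G_k$, $G_k'$, $A$ and $B$.
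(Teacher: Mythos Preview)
Your proposal is correct and matches the paper's approach exactly: the paper simply states that the lemma follows from Lemma \ref{CoBiplemma} and Lemma \ref{auxlemma}, and you have spelled out precisely that combination. The additional verification you give --- that $G_k'$ is indeed the graph obtained from $G_k$ via the construction of Lemma \ref{CoBiplemma} with the given bipartition $\{A,B\}$ --- is a helpful sanity check but entails no new idea.
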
 
\begin{theorem}\label{CBGBoxtheorem}
For any $b \in \mathbb{N}^+$, there exists a CBG $G$ with $\boxi(G) > b$. 
\end{theorem}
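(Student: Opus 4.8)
The plan is to obtain Theorem~\ref{CBGBoxtheorem} as an immediate consequence of the machinery already assembled, specifically Theorem~\ref{CBGPowertheorem} together with Lemma~\ref{CBGBoxlemma}. The entire purpose of building the trees $T_k$ and passing to their bipartite powers $G_k = T_k^{[k]}$ is that $G_k$ is simultaneously (a) a chordal bipartite graph, by Theorem~\ref{CBGPowertheorem}, and (b) a graph of boxicity exceeding $\frac{k+1}{4}$, by Lemma~\ref{CBGBoxlemma}. Hence the only remaining step is to take $k$ large enough.

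Concretely, given $b \in \mathbb{N}^+$, I would choose an odd positive integer $k$ with $\frac{k+1}{4} > b$; for instance $k = 4b+1$ works, since $4b+1$ is odd and $\frac{(4b+1)+1}{4} = b + \frac{1}{2} > b$. Set $G := G_k = T_k^{[k]}$. Since $k$ is an odd positive integer, $T_k$ is a tree and $g(k)$ is well defined by the stated recursion, so $G_k$ is indeed the graph described in Figure~\ref{TkFigure}; by Theorem~\ref{CBGPowertheorem}, $G$ is a CBG. By Lemma~\ref{CBGBoxlemma}, $\boxi(G) = \boxi(G_k) > \frac{k+1}{4} > b$, as required. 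Taking $b = 2$ in particular yields a CBG of boxicity at least $3$, disproving the conjecture of Otachi, Okamoto and Yamazaki \cite{OOY}; and since the conclusion holds for every $b$, this exhibits the promised infinite family of chordal bipartite graphs with unbounded boxicity.

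I do not expect any genuine obstacle at this stage: all the real work has been absorbed into Theorem~\ref{CBGPowertheorem} (that $T_k^{[k]}$ is chordal bipartite) and into Lemmas~\ref{CoBiplemma} and~\ref{auxlemma} (that the lower bound on boxicity survives the operation of adding the two cliques, and the inductive lower bound on $\boxi(X_k')$). The only point requiring a little care is bookkeeping around the parameter: $k$ must be odd for $G^{[k]}$, for the recursion defining $g$, and for Theorem~\ref{CBGPowertheorem} to apply, and the chosen $k$ must make $\frac{k+1}{4}$ strictly exceed $b$. Both constraints are met by the explicit choice $k = 4b+1$, so the proof is essentially a one-line assembly. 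Optionally one could phrase the conclusion as ``$\boxi(G_k) \to \infty$ as $k$ ranges over the odd positive integers,'' from which the theorem follows immediately.
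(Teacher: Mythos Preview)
Your proposal is correct and follows essentially the same approach as the paper: invoke Theorem~\ref{CBGPowertheorem} to certify that $G_k$ is chordal bipartite and Lemma~\ref{CBGBoxlemma} to force $\boxi(G_k)>b$ for a suitably chosen odd $k$. The paper picks $k=4b-1$ (giving $\frac{k+1}{4}=b$ exactly, so the strict inequality in Lemma~\ref{CBGBoxlemma} already yields $\boxi(G)>b$), while you pick $k=4b+1$; both choices work and the argument is otherwise identical.
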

\begin{proof}
For any odd positive integer $k$, since $G_k$ is the bipartite power of a 
tree $T_k$, $G_k$ is a CBG by Theorem \ref{CBGPowertheorem}.  
Let $G = G_{(4b-1)}$. Then by Lemma \ref{CBGBoxlemma}, $\boxi(G) 
> b$. 
\null\hfill\bbox
\end{proof}

\end{document}